\newtheorem{theorem}{Theorem}[section]
\newtheorem{lemma}[theorem]{Lemma}
\newtheorem{corollary}[theorem]{Corollary}
\theoremstyle{definition}
\theoremstyle{remark}
\numberwithin{equation}{section}
\newcommand{\mC}{\ensuremath{\mathbb{C}}}
\newcommand{\mD}{\ensuremath{\mathbb{D}}}
\newcommand{\mN}{\ensuremath{\mathbb{N}}}
\begin{document}

\title{Two Normality Criteria and Counterexamples to the Converse of Bloch's Principle}

\author[K. S. Charak]{ Kuldeep Singh Charak}
\address{Department of Mathematics, University of Jammu,
Jammu-180 006, India}
\email{kscharak7@rediffmail.com}

\author[V. Singh]{Virender Singh}
\address{Department of Mathematics, University of Jammu,
Jammu-180 006, India}
\email{virendersingh2323@gmail.com }
 
\begin{abstract}
In this paper, we prove two normality
criteria for a family of meromorphic functions.
The first criterion extends a result of Fang and Zalcman[{\it Normal families and shared values of meromorphic functions II,
Comput. Methods Funct. Theory, 1(2001), 289 - 299}] to a bigger class of differential polynomials whereas the second one
leads to some counterexamples to the converse of the Bloch's principle.
\end{abstract}

\renewcommand{\thefootnote}{\fnsymbol{footnote}}
\footnotetext{2010 {\it Mathematics Subject Classification}. 30D35, 30D45.}
\footnotetext{{\it Keywords and phrases}. Normal families, Meromorphic function, Differential polynomial, Shared value, Small function.}
\footnotetext{The research work of the second author is supported by the CSIR India.}

\maketitle

\section{\textbf{Introduction and Main Results}}
It is assumed that the reader is familiar with the standard
notions used in the Nevanlinna value distribution theory such as
$T(r,f),m(r,f),N(r,f),S(r,f)$ etc., one may refer
to \cite{hayman-1}. In this paper, we obtain a normality criterion
for a family of meromorphic functions which involves sharing of
holomorphic functions by certain differential polynomials generated by the members of the family.\\

In 2001, Fang and Zalcman \cite[Theorem 2, p.291]{fang-1} proved the following\\

\textbf{Theorem A.} Let $\mathcal{F}$ be a family of meromorphic functions on a domain $D$, $k$ be a positive integer and $a(\neq 0)$ and $b$ be two finite values. If, for every $f\in \mathcal{F}$, all zeros of $f$ have multiplicity at least $k$ and $f(z)f^{(k)}(z)$=$a\Leftrightarrow f^{(k)}(z)$=$b$, then the family $\mathcal{F}$ is normal on $D$.\\

In this paper, we extend this result as

\begin{theorem}\label{theorem1} Let $\mathcal{F}$ be a family of meromorphic functions
on a domain $D$. Let $n\geq 2,m\geq k\geq 1$ be the positive
integers and let $a(\neq 0)$ and $b$ be two finite values.
If, for each $f\in \mathcal{F}$, $f^{n}(z)(f^{m})^{(k)}(z)$=$a\Leftrightarrow (f^{m})^{(k)}(z)$=$b$, then
the family $\mathcal{F}$ is normal on $D$.
\end{theorem}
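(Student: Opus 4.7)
My plan is to argue by contradiction through a Zalcman--Pang rescaling. Assume $\mathcal{F}$ fails to be normal at some $z_0\in D$, and set $\alpha := k/(n+m)$, the unique scaling weight under which $f^n(f^m)^{(k)}$ is invariant; the inequalities $n\ge 2$ and $m\ge k$ force $0<\alpha<1$, putting $\alpha$ inside the admissible range of Pang's refinement of Zalcman's lemma. That lemma supplies $f_j\in\mathcal{F}$, $z_j\to z_0$, $\rho_j\to 0^+$ such that
\begin{equation*}
g_j(\zeta):=\rho_j^{-k/(n+m)}\,f_j(z_j+\rho_j\zeta)\longrightarrow g(\zeta)
\end{equation*}
locally uniformly in the spherical metric on $\mathbb{C}$, with $g$ a non-constant meromorphic function of finite order.

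The next step is to transfer the biconditional hypothesis to $g$. A direct computation gives
\begin{equation*}
g_j^n(g_j^m)^{(k)}(\zeta)=f_j^n(f_j^m)^{(k)}(z_j+\rho_j\zeta), \qquad (g_j^m)^{(k)}(\zeta)=\rho_j^{-nk/(n+m)}(f_j^m)^{(k)}(z_j+\rho_j\zeta),
\end{equation*}
so the hypothesis rewrites as
\begin{equation*}
g_j^n(g_j^m)^{(k)}(\zeta)=a \iff (g_j^m)^{(k)}(\zeta)=b\rho_j^{nk/(n+m)},
\end{equation*}
whose right-hand side tends to $0$. The case $(g^m)^{(k)}\equiv 0$ is disposed of quickly, since it would force $g$ to be entire and a polynomial of degree strictly less than $k/m\le 1$, contradicting non-constancy. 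Otherwise, Hurwitz applied to both $(g_j^m)^{(k)} - b\rho_j^{nk/(n+m)} \to (g^m)^{(k)}$ and $g_j^n(g_j^m)^{(k)}-a\to g^n(g^m)^{(k)}-a$ shows that at every $\zeta_0$ where $g$ is finite, neither $(g^m)^{(k)}(\zeta_0)=0$ nor $g^n(\zeta_0)(g^m)^{(k)}(\zeta_0)=a$ can hold. Combined with $g^n(g^m)^{(k)}=\infty\ne a$ at poles of $g$, this gives that $g^n(g^m)^{(k)} - a$ has no zeros on $\mathbb{C}$ at all.

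The final step is to show that no such non-constant meromorphic $g$ can exist. If $g$ is a polynomial, then $g^n(g^m)^{(k)} - a$ is a zero-free polynomial, hence a non-zero constant; but the resulting degree equation $(n+m)\deg g = k$ has no positive integer solution (since $n+m\ge 2+k>k$), a contradiction. If $g$ is a non-polynomial rational function, then $g^n(g^m)^{(k)} - a$ is a zero-free rational function whose poles lie among those of $g$, so it takes the form $c\prod_i(z-p_i)^{-(n+m)q_i-k}$ and tends to $0$ at $\infty$; matching this against the leading asymptotic of $g^n(g^m)^{(k)}-a$, which behaves like $z^{(n+m)d-k}$ when $g\sim c'z^d$ with $d\in\mathbb{Z}$, contradicts the $\to 0$ behavior in each of the subcases $d\ge 1$, $d=0$, $d\le -1$. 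Finally, if $g$ is transcendental meromorphic, one invokes a Hayman-type Picard theorem for the differential polynomial $g^n(g^m)^{(k)}$, which (for $n\ge 1$ and $a\ne 0$) asserts the existence of infinitely many zeros of $g^n(g^m)^{(k)}-a$, again a contradiction. The main obstacle is this last case, where the value-distribution theorem for $g^n(g^m)^{(k)}$ --- a generalization of Hayman's classical result on $f^n f'$ --- is the real analytic heart of the argument; the rational subcase, though split into three asymptotic regimes, is essentially a leading-order calculation, and the rescaling-and-Hurwitz setup is standard for this class of normality criteria.
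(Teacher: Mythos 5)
Your architecture (contradiction via the Zalcman--Pang rescaling with $\alpha=k/(n+m)$, two Hurwitz transfers to obtain Picard-type conditions on the limit $g$, then a nonexistence argument for $g$) is the same as the paper's, and your rational-function case --- a zero-free rational function equals $c/Q$, hence $g^n(g^m)^{(k)}\to a$ at $\infty$, which is then ruled out by comparing leading asymptotics in the regimes $d\ge 1$, $d=0$, $d\le -1$ --- is correct and is in fact a cleaner alternative to the paper's Lemma \ref{lemma3} when $a$ is constant. (Minor typo: the exponent in your identity for $(g_j^m)^{(k)}$ should be $\rho_j^{+nk/(n+m)}$, as your subsequent ``tends to $0$'' already assumes.) However, there are two gaps. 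The smaller one: before applying Hurwitz to $g_j^n(g_j^m)^{(k)}-a\to g^n(g^m)^{(k)}-a$ you must exclude $g^n(g^m)^{(k)}\equiv a$, and this case is not absorbed by your final step (there $g^n(g^m)^{(k)}-a$ is identically zero, not zero-free). The paper disposes of it by observing that such a $g$ is entire, zero-free and has bounded spherical derivative, hence $g=e^{c\zeta+d}$ by Lemma \ref{lemma7}, and $m^kc^ke^{(n+m)(c\zeta+d)}\equiv a$ is impossible.

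The serious gap is the transcendental case. You correctly derive \emph{both} conditions $g^n(g^m)^{(k)}\ne a$ and $(g^m)^{(k)}\ne 0$, but then discard the second and rest the whole transcendental case on an uncited ``Hayman-type Picard theorem'' asserting that $g^n(g^m)^{(k)}-a$ must have zeros. No such theorem is quotable in the generality you need: the only value-distribution estimate for this differential polynomial that the authors themselves have (Lemma \ref{lemma5}, from an unpublished manuscript) requires $m>k$ and so does not even cover the case $m=k$ allowed by the theorem, and proving such a statement for all $n\ge 2$, $m\ge k\ge 1$ is a genuine research problem, not a black box. The paper's Lemma \ref{lemma4} avoids this entirely by using the two conditions \emph{together}: since $(f^m)^{(k)}\ne 0$ and $m\ge k$, every zero of $f$ is simple (and $f$ is zero-free if $m>k$), whence $\overline{N}(r,1/(f^n(f^m)^{(k)}))=\overline{N}(r,1/f)+S(r,f)$; the second fundamental theorem for three small functions then gives $\overline{N}(r,f)=S(r,f)$ and an estimate $(n-1)T(r,f)\le S(r,f)$, a contradiction for $n\ge 2$. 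You should retain the condition $(g^m)^{(k)}\ne 0$ and run that elementary Nevanlinna argument rather than appeal to an unavailable Picard theorem.
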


Now it is natural to ask whether Theorem 1.1 still holds if $a$ and $b$ are holomorphic functions. In this direction, we prove the following

\begin{theorem} \label{theorem2} Let $n\geq 2,m\geq k\geq 1$ be the positive integers. Let $a(z)(\not\equiv 0)$ and $b(z)$ be two holomorphic functions on a domain $D$ such that multiplicity of each zero of $a(z)$ is at most $p$, where $p\leq \left\lceil\frac{n-1}{m}\right\rceil-1.$
Then, the family $\mathcal{F}$ of meromorphic functions on a domain $D$, all of whose poles are of multiplicity at least $p+1$, such that $f^{n}(z)(f^{m})^{(k)}(z)$=$a(z)\Leftrightarrow (f^{m})^{(k)}(z)$=$b(z)$, for every $f\in F$, is normal on $D$.
\end{theorem}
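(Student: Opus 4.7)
My approach would be a Zalcman--Pang rescaling by contradiction. Suppose $\mathcal{F}$ is not normal at some $z_0 \in D$. Because every $f\in\mathcal{F}$ has poles of multiplicity at least $p+1$, an appropriate form of the Pang--Zalcman lemma supplies sequences $f_j\in\mathcal{F}$, $z_j\to z_0$ and $\rho_j\to 0^+$ with
\begin{equation*}
g_j(\zeta):=\rho_j^{-\alpha}\,f_j(z_j+\rho_j\zeta)\longrightarrow g(\zeta)
\end{equation*}
spherically, locally uniformly on $\mathbb{C}$, where $g$ is a nonconstant meromorphic function of finite order whose poles have multiplicity at least $p+1$. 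I would take $\alpha=k/(n+m)$, which exactly neutralises the homogeneity of the differential polynomial, so that
\begin{equation*}
f_{j}^{n}(f_{j}^{m})^{(k)}(z_j+\rho_j\zeta) \;=\; g_{j}^{n}(g_{j}^{m})^{(k)}(\zeta) \;\longrightarrow\; g^{n}(g^{m})^{(k)}(\zeta),
\end{equation*}
while $(f_{j}^{m})^{(k)}(z_j+\rho_j\zeta)=\rho_j^{-n\alpha}(g_{j}^{m})^{(k)}(\zeta)$ blows up wherever $(g^{m})^{(k)}\not\equiv 0$. Since $n\ge 2$ and $m\ge k$ give $0<\alpha<1\le p+1$, this exponent lies in the admissible range.

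I would then use Hurwitz and the biconditional. First assume $a(z_0)\neq 0$. If $\zeta_0\in\mathbb{C}$ satisfied $g^{n}(g^{m})^{(k)}(\zeta_0)=a(z_0)$ with $g(\zeta_0)$ finite, Hurwitz would yield $\zeta_j\to\zeta_0$ with $f_j^n(f_j^m)^{(k)}(z_j+\rho_j\zeta_j)=a(z_j+\rho_j\zeta_j)$; the biconditional then gives $(f_j^m)^{(k)}(z_j+\rho_j\zeta_j)=b(z_j+\rho_j\zeta_j)$, whose rescaling
\begin{equation*}
(g_j^m)^{(k)}(\zeta_j) \;=\; \rho_j^{n\alpha}\,b(z_j+\rho_j\zeta_j)\;\longrightarrow\;0
\end{equation*}
forces $(g^m)^{(k)}(\zeta_0)=0$, whence $g^n(g^m)^{(k)}(\zeta_0)=0\neq a(z_0)$, a contradiction. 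So $g^n(g^m)^{(k)}$ omits the nonzero value $a(z_0)$ on $\mathbb{C}$, contradicting the Hayman-type Picard theorem for this differential polynomial (the same ingredient driving Theorem~A) applied to the nonconstant finite-order function $g$.

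The crux is the case $a(z_0)=0$, where the Hurwitz contradiction collapses since $g^{n}(g^{m})^{(k)}$ is allowed to vanish. Here I would argue by a multiplicity count: at a zero $\zeta_0$ of $g$ of multiplicity $s\ge 1$, the limit $g^{n}(g^{m})^{(k)}$ vanishes to order $(n+m)s-k$, while $(g^m)^{(k)}$ vanishes only to order $ms-k$. By Hurwitz, after passing to a subsequence for which the roots near $\zeta_0$ are distinct and simple, the equations $f_j^n(f_j^m)^{(k)}(\cdot)=a(\cdot)$ and $(f_j^m)^{(k)}(\cdot)=b(\cdot)$ would exhibit $(n+m)s-k$ and $ms-k$ distinct solutions clustering at $\zeta_0$, respectively; the IM biconditional forces these zero sets to coincide, which is impossible since $ns\ge 2>0$. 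Hence $g$ has no zeros. A parallel count at zeros of $(g^m)^{(k)}$ disjoint from zeros of $g$, and at poles of $g$ (which have multiplicity at least $p+1$ and therefore contribute only poles to $g^n(g^m)^{(k)}$), combined with the inequality $p\le\lceil(n-1)/m\rceil-1$, should then constrain $g$ to a form incompatible with its being a nonconstant meromorphic function of finite order. The main obstacle I anticipate is precisely this multiplicity bookkeeping at $a(z_0)=0$: extracting the genericity that gives distinct simple roots, and calibrating the bounds on $n$, $m$, $k$, the zero order of $a$, and the pole order $p+1$ of $g$ against one another to close the final contradiction.
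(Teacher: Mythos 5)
Your Case~I ($a(z_0)\neq 0$) is essentially the paper's: rescale with exponent $k/(n+m)$, use Hurwitz and the biconditional to show the limit $g$ satisfies $g^n(g^m)^{(k)}\neq a(z_0)$ and $(g^m)^{(k)}\neq 0$, and invoke a value-distribution result (the paper's Lemmas~\ref{lemma3} and~\ref{lemma4}, which require \emph{both} omissions simultaneously --- note that omitting $a(z_0)$ alone, which is all your sketch records, is not what those lemmas contradict). The genuine gap is in the case $a(z_0)=0$, which is the entire content of Theorem~\ref{theorem2} beyond Theorem~\ref{theorem1}, and which you yourself flag as unresolved. Two specific problems. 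First, you keep the exponent $\alpha=k/(n+m)$, under which the rescaled equation reads $g_j^n(g_j^m)^{(k)}(\zeta)=a(z_j+\rho_j\zeta)\to 0$; the limiting target degenerates to $0$ and no usable Picard-type statement survives. The paper instead writes $a(z)=z^pa_1(z)$ and renormalizes by $\rho_j^{-(p+k)/(n+m)}$, so that the rescaled equation becomes $g_j^n(g_j^m)^{(k)}(\zeta)=(z_j/\rho_j+\zeta)^p a_1(\cdot)$. This forces the dichotomy $z_j/\rho_j\to\infty$ versus $z_j/\rho_j\to c$ finite, which is entirely absent from your plan: in the first subcase one passes to an auxiliary family $G_j(\zeta)=z_j^{-(p+k)/(n+m)}f_j(z_j+z_j\zeta)$, applies Case~I to it, and derives a contradiction via Marty's theorem and $G^{\#}(0)=0\Rightarrow g'\equiv 0$; in the second the limit equation is $H^n(H^m)^{(k)}=\zeta^p$ with $\deg\zeta^p=p\le n-2$, and Lemma~\ref{lemma3} (whose hypothesis on pole multiplicities is exactly why the theorem assumes all poles of $f$ have multiplicity at least $p+1$) closes the argument. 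This is also where the hypothesis $p\le\lceil(n-1)/m\rceil-1$ is actually used, via $l=k-m(p+k)/(n+m)>0$; your sketch never engages with it.

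Second, the multiplicity count you propose as a substitute does not work. The biconditional says the solution sets of $f_j^n(f_j^m)^{(k)}=a$ and $(f_j^m)^{(k)}=b$ coincide \emph{as sets}; it says nothing about multiplicities, and at a zero of $g$ of order $s$ with $ms>k$ both limit functions $g^n(g^m)^{(k)}$ and $(g^m)^{(k)}$ vanish, so the two clusters of solutions can sit over the same points with different multiplicities without any contradiction. Your step ``after passing to a subsequence for which the roots near $\zeta_0$ are distinct and simple'' has no justification --- there is no genericity to extract, since the $f_j$ are given, not chosen. So the comparison of the counts $(n+m)s-k$ and $ms-k$ proves nothing, and the remainder of the paragraph (``should then constrain $g$ to a form incompatible\dots'') is not an argument. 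As written, the proposal proves only (a weakened form of) Case~I and leaves the theorem's essential case open.
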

 
\textbf{Remark 1.1}. Consider the family $\mathcal{F}$=$\{$$f_l :l\in\mN\}$, where $f_l (z)=e^{lz}$ on
the unit disk $\mD$. Then
$$(f_l ^{m})^{(k)}(z)=m^k l^k e^{mlz} \text{ and } f_l ^{n}(z)(f_l ^{m})^{(k)}(z)=m^k l^k e^{(n+m)lz} $$
Clearly, $f_l ^{n}(z)(f_l ^{m})^{(k)}(z)$=$0\Leftrightarrow (f_l ^{m})^{(k)}(z)$=$0$. However, $\mathcal{F}$ is not
normal on $\mD$. Thus the condition that $a\neq0$ is essential in Theorem \ref{theorem1}.\\

\textbf{Remark 1.2}. Consider the family $\mathcal{F}$=$\{$$f_l :l\in\mN\}$, where $f_l (z)=2lz$ on the unit disk $\mD$. Then 
$$f_l ^{n}(z)(f_l ^{m})^{(k)}(z)=(2l)^{n+m} m(m-1)(m-2)....(m-k)z^{n+m-k}$$ and $$(f_l ^{m})^{(k)}(z)=(2l)^{m}m(m-1)(m-2)....(m-k)z^{m-k}$$

Clearly, $f_l ^{n}(z)(f_l ^{m})^{(k)}(z)$=$a(z)\Leftrightarrow
(f_l ^{m})^{(k)}(z)$=$b(z)$, where $a(z)=z^{n+m-k}$ and
$b(z)=z^{m-k}$. We can see that multiplicity of zeros of $a(z)$ is
at least $n$. However, the family $\mathcal{F}$ is not normal on $\mD$.
Thus, the restriction on the multiplicities of the zeros of $a(z)$ is essential in Theorem \ref{theorem2}.\\

In 2004, Lahiri and Dewan \cite[Theorem 1.4, p.3]{lahiri-2} proved\\

\textbf{Theorem B.} Let $\mathcal{F}$ be a family of meromorphic functions in a domain $D$ and $a(\neq 0),b\in \mC$.
Suppose that $E_f =\left\{ z\in D : f^{(k)} -af^{-n} =b\right\} $, where $k$ and $n(\geq k)$ are the positive integers. If for every
$f\in \mathcal{F}$ \\
$(i)$ $f$ has no zero of multiplicity less than $k$\\
$(ii)$ there exists a positive number $M$ such that for every $f\in \mathcal{F}$, $\left| f(z)\right |
 \geq M$ whenever $z\in E_f$, then $\mathcal{F}$ is normal.

\medskip

In 2006, Xu and Zhang\cite[Theorem 1.3, p.5]{xu-2} improved Theorem B as\\

\textbf{Theorem C.} Let $\mathcal{F}$ be a family of meromorphic functions in a domain $D$ and $a(\neq 0),b\in \mC$.
 Suppose that $E_f =\left\{ z\in D : f^{(k)} -af^{-n} =b\right\} $, where $k$ and $n$ are the positive integers. If for every $f\in \mathcal{F}$ \\
$(i)$ $f$ has no zero of multiplicity at least $k$\\
$(ii)$ there exists a positive number $M$ such that for every $f\in \mathcal{F}$, $\left| f(z)\right | \geq M$ whenever $z\in E_f$, then $\mathcal{F}$ is normal so long as\\
$(A)$ $n\geq 2$ or \\
$(B)$ $n=1$ and $\overline{N}_k (r,1/f)=S(r,f)$.

\medskip

In this paper, we prove the following

\begin{theorem} \label{theorem3} Let $\mathcal{F}$ be a family of meromorphic functions in a domain $D$. Let $n_1, n_2, m>k\geq1 $  be the non-negative integers such that $n_1 + n_2  \geq 1 $. Suppose $\psi (z):= f^{n_1} (z) (f^m)^{(k)}(z) -a f^{-n_2} (z) -b $, where $a(\neq 0),b \in\mC$. If there exists a positive constant $M$ such that for every $f\in \mathcal {F}$, either $\left| f(z)\right | \geq M$ or $\left| (f^m)^{(k)}(z)\right | \leq M$ whenever $z$ is a zero of $\psi (z)$, then $\mathcal{F}$ is normal in $D$.
\end{theorem}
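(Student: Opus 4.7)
The plan is to argue by contradiction via the Pang--Zalcman rescaling lemma. Suppose $\mathcal F$ is not normal at some $z_0\in D$; I will treat the main case $n_2\ge 1$, the subcase $n_2=0$ being analogous. Set $\alpha:=k/(n_1+n_2+m)$, which lies in $(0,1)$. By Pang--Zalcman there exist $f_j\in\mathcal F$, $z_j\to z_0$, and $\rho_j\to 0^+$ such that
\[
g_j(\zeta)\;:=\;\rho_j^{-\alpha}f_j(z_j+\rho_j\zeta)\ \longrightarrow\ g(\zeta)
\]
locally uniformly in the spherical metric, where $g$ is a nonconstant meromorphic function on $\mathbb C$ of finite order; the goal is to contradict the existence of such a $g$.

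Next I would track $\psi$ through the rescaling. Using $(n_1+m)\alpha-k=-n_2\alpha$, a direct computation gives
\[
\psi_j(z_j+\rho_j\zeta)\;=\;\rho_j^{-n_2\alpha}\,g_j^{-n_2}(\zeta)\bigl[g_j^{n_1+n_2}(g_j^m)^{(k)}-a\bigr](\zeta)\;-\;b,
\]
whose bracket converges to $H(\zeta):=g^{n_1+n_2}(\zeta)(g^m)^{(k)}(\zeta)-a$. Since $\rho_j^{-n_2\alpha}\to\infty$ while $b$ stays bounded, Hurwitz's theorem forces every zero of $g^{-n_2}H$ to be a limit in the $\zeta$-coordinate of zeros of $\psi_j$ in the original variable, provided this function is not identically zero. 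Now transcribe the hypothesis: if $z_j+\rho_j\zeta_j$ is such a zero with $\zeta_j\to\zeta^*$, then either $|g_j(\zeta_j)|\ge M\rho_j^{-\alpha}\to\infty$, forcing $g(\zeta^*)=\infty$, or $|(g_j^m)^{(k)}(\zeta_j)|\le M\rho_j^{k-m\alpha}\to 0$, forcing $(g^m)^{(k)}(\zeta^*)=0$ (using $k-m\alpha=k(n_1+n_2)/(n_1+n_2+m)>0$). But $H(\zeta^*)=0$ combined with $a\ne 0$ forces both $g(\zeta^*)\in\mathbb C\setminus\{0\}$ and $(g^m)^{(k)}(\zeta^*)\ne 0$, ruling out both alternatives. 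Hence $H$ has no zeros on $\mathbb C$: that is, $g^{n_1+n_2}(g^m)^{(k)}\ne a$ everywhere on $\mathbb C$.

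The main obstacle is to extract a contradiction from this omission. I would first dispose of the degenerate subcase $H\equiv 0$, i.e., $g^{n_1+n_2}(g^m)^{(k)}\equiv a$, by a singularities count: a pole of $g$ of order $p$ yields on the left a pole of order $(n_1+n_2+m)p+k$, while a zero of $g$ of order $q\ge 1$ yields a zero of order at least $(n_1+n_2)q+(mq-k)>0$ (using $m>k$), so $g$ has neither zeros nor poles; finiteness of order then gives $g=\exp P$ for a polynomial $P$, and substituting into the identity collapses $P$ to a constant, contradicting nonconstancy of $g$. For the nondegenerate case I would appeal to a Hayman--type omitted-value theorem for the differential polynomial $g^{n_1+n_2}(g^m)^{(k)}$: for $n_1+n_2\ge 1$ and $m>k\ge 1$, this polynomial in any nonconstant meromorphic function on $\mathbb C$ must attain every nonzero finite value (the rational case reducing to surjectivity of nonconstant rational functions, and the transcendental case carrying the technical weight), yielding the required contradiction.
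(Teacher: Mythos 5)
Your argument is essentially the paper's proof: the same Zalcman rescaling with exponent $\alpha=k/(n_1+n_2+m)$, the same identity $\rho_j^{n_2\alpha}\psi_j(z_j+\rho_j\zeta)=g_j^{-n_2}\bigl[g_j^{n_1+n_2}(g_j^m)^{(k)}-a\bigr]-\rho_j^{n_2\alpha}b$ feeding Hurwitz, the same translation of the two hypotheses into $g(\zeta^*)=\infty$ or $(g^m)^{(k)}(\zeta^*)=0$ at a putative zero of $g^{n_1+n_2}(g^m)^{(k)}-a$, and the same final appeal to a Hayman-type value-attainment theorem for $f^{n}(f^m)^{(k)}$ --- which is precisely the paper's Lemmas 2.5 and 2.6, themselves imported without proof from a cited manuscript, so black-boxing that step leaves you at parity with the paper (and your explicit treatment of the degenerate case $g^{n_1+n_2}(g^m)^{(k)}\equiv a$ is a detail the paper silently skips but that Hurwitz genuinely requires). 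One small correction: your parenthetical that the rational case follows from ``surjectivity of nonconstant rational functions'' is not right as stated, since a nonconstant rational function can omit a finite value on $\mathbb{C}$ by taking it only at $\infty$ (e.g.\ $a+1/z$ omits $a$); that is exactly why the paper needs the pole-and-degree analysis behind Lemma 2.6 rather than a one-line surjectivity remark.
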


As an application of Theorem \ref{theorem3}, we construct some counterexamples to the converse of Bloch's principle in the last section of this paper.

\begin{corollary} Let $\mathcal{F}$ be a family of meromorphic functions in a domain $D$. Let $n,m>k$  be the positive integers and $a(\neq0)$ be a finite complex number. If there exists a positive constant $M$ such that for every $f\in \mathcal {F},~f^{n} (z) (f^m)^{(k)}(z) =a \Rightarrow \left| (f^m)^{(k)}(z)\right | \leq M$, then $\mathcal{F}$ is normal in $D$.
\end{corollary}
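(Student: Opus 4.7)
The plan is to deduce this directly from Theorem \ref{theorem3} by specialization. I would set $n_1 = n$, $n_2 = 0$, and take the additive constant (called $b$ in Theorem \ref{theorem3}) to be $0$. With these choices, the function
$$\psi(z) = f^{n_1}(z)(f^m)^{(k)}(z) - a f^{-n_2}(z) - b$$
of Theorem \ref{theorem3} collapses to
$$\psi(z) = f^{n}(z)(f^{m})^{(k)}(z) - a,$$
so the zero set of $\psi$ coincides exactly with the set $\{\,z \in D : f^{n}(z)(f^{m})^{(k)}(z) = a\,\}$ that appears in the hypothesis of the corollary.

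Next I would verify that the hypotheses of Theorem \ref{theorem3} are met. The size condition $n_1 + n_2 \geq 1$ is immediate since $n_1 = n \geq 1$, and $m > k \geq 1$ is inherited from the corollary. The only remaining item is the boundedness condition at the zeros of $\psi$: the corollary postulates $|(f^m)^{(k)}(z)| \leq M$ at every such point, which is stronger than (and in particular implies) the disjunctive requirement ``$|f(z)| \geq M$ or $|(f^m)^{(k)}(z)| \leq M$'' demanded by Theorem \ref{theorem3}. Thus all hypotheses hold and the theorem yields the normality of $\mathcal{F}$.

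There is no genuine technical obstacle here; the corollary is a routine specialization in which $n_2 = 0$ reduces the term $a f^{-n_2}$ to the constant $a$, and the added flexibility of Theorem \ref{theorem3}'s disjunctive condition is simply not needed. The real purpose of stating it separately, as the sentence preceding the corollary makes clear, is to isolate the clean shared-value formulation $f^{n}(f^{m})^{(k)} = a$ that will be exploited in the next section to produce explicit counterexamples to the converse of Bloch's principle.
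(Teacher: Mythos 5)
Your specialization $n_1 = n$, $n_2 = 0$, $b = 0$ of Theorem \ref{theorem3} is exactly the intended derivation: the paper states the corollary as an immediate consequence of that theorem and gives no separate argument. Your verification that the hypothesis $\left| (f^m)^{(k)}(z)\right| \leq M$ implies the disjunctive condition of Theorem \ref{theorem3} is correct, so the proof is complete and matches the paper's approach.
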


\section{\textbf{Some Lemmas}}

\begin{lemma}\cite{zalcman-1}\label{lemma1} (Zalcman's lemma) Let $\mathcal{F}$ be a family of meromorphic functions in the unit disk $\mD$ and $\alpha$ be a real number satisfying $-1<\alpha<1$. Then, if $\mathcal{F}$ is not normal at a point $z_0 \in \mD $, there exist, for each $\alpha:-1<\alpha<1$,\\
$(i)$ a real number $r$: $r <1$,\\
$(ii)$ points $z_n$: $\left|z_n\right| <r$,\\
$(iii)$ positive numbers $\rho_n$: $\rho_n$$\rightarrow$0,\\
$(iv)$ functions  $f_n\in F$
such that $g_n$($\zeta$)=$\rho^{-\alpha}$$f_n$($z_n$+$\rho_n$$\zeta$) converges locally uniformly with respect to the spherical metric to $g$($\zeta$), where $g(\zeta$) is a non constant meromorphic function on $\mC$ and $g^{\#}$($\zeta$)$\leq g^{\#}(0)=1$.
Moreover, the order of $g$ is not greater than 2.
\end{lemma}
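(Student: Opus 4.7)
The plan is to combine Marty's theorem with a Zalcman-type rescaling argument. After translation and shrinking we may assume $z_0 = 0$ and that $\mathcal{F}$ is a family of meromorphic functions on $\mD$. Since $\mathcal{F}$ fails to be normal at $0$, Marty's theorem guarantees that the spherical derivatives $\{f^{\#} : f \in \mathcal{F}\}$ are not uniformly bounded on any closed subdisk $\overline{D(0,r)} \subset \mD$; in particular we may select $f_n \in \mathcal{F}$ and $\tilde{z}_n \to 0$ with $f_n^{\#}(\tilde{z}_n) \to \infty$.

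The heart of the matter is a weighted maximization that yields the correct rescaling centres $z_n$ and widths $\rho_n$. Fix $r$ with $0 < r < 1$. In the classical case $\alpha = 0$ one sets
$$M_n = \max_{|z| \le r} (r^2 - |z|^2)\, f_n^{\#}(z),$$
attained at some $z_n \in D(0, r)$, and observes $M_n \to \infty$. Putting $\rho_n = (r^2 - |z_n|^2)/M_n$, the rescalings $g_n(\zeta) = f_n(z_n + \rho_n \zeta)$ are defined on disks $|\zeta| < (r - |z_n|)/\rho_n$, which exhaust $\mC$, and satisfy
$$g_n^{\#}(\zeta) = \rho_n f_n^{\#}(z_n + \rho_n\zeta) \le \frac{r^2 - |z_n|^2}{r^2 - |z_n + \rho_n\zeta|^2} \longrightarrow 1$$
locally uniformly, with $g_n^{\#}(0) = 1$ identically. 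For the general exponent $\alpha \in (-1, 1)$ one replaces this weight by a $\rho$-dependent variant tuned so that the rescaled function $\rho_n^{-\alpha} f_n(z_n + \rho_n \zeta)$ inherits the bound $g_n^{\#}(\zeta) \le 1 + o(1)$ with equality at $\zeta = 0$; this is the Pang-Zalcman modification, and it again forces $\rho_n \to 0$ because $M_n \to \infty$.

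With $\{g_n\}$ having locally uniformly bounded spherical derivatives, Marty's theorem (in its normality direction) extracts a subsequence converging locally uniformly in the spherical metric to a meromorphic $g$ on $\mC$, with $g^{\#} \le 1$ everywhere and $g^{\#}(0) = 1$; the latter rules out a constant limit. For the order bound, since $g^{\#} \le 1$ on $\mC$, the Ahlfors-Shimizu characteristic satisfies
$$T_0(R, g) = \int_0^R \frac{dt}{t} \cdot \frac{1}{\pi} \int_{|z| \le t} (g^{\#}(z))^2 \, dA \le \int_0^R t \, dt = \frac{R^2}{2},$$
so $T(R, g) = O(R^2)$ and thus the order $\rho(g) \le 2$.

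The principal obstacle is the construction of $(z_n, \rho_n)$ producing simultaneously $g_n^{\#} \le 1 + o(1)$, $g_n^{\#}(0) = 1$, and $\rho_n \to 0$; for $\alpha = 0$ this is the standard reparameterization shown above, but for general $\alpha$ the delicate point is choosing the right $\rho$-weighted functional to maximize, so that the extra factor $\rho_n^{-\alpha}$ appearing in $g_n$ does not spoil the normalization of the spherical derivative. Everything else (compactness of the maximization domain, the passage to a subsequential limit, the Ahlfors estimate) is routine.
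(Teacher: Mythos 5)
The paper does not actually prove this lemma: it is quoted as a known result (the Pang--Zalcman rescaling lemma) from the cited literature, so there is no in-paper argument to compare yours against. Judged on its own terms, your outline is correct and essentially complete only for $\alpha=0$: the selection of $f_n$ and $\tilde z_n\to 0$ with $f_n^{\#}(\tilde z_n)\to\infty$, the maximization of $(r^2-|z|^2)f_n^{\#}(z)$, the bound $g_n^{\#}(\zeta)\le (r^2-|z_n|^2)/(r^2-|z_n+\rho_n\zeta|^2)\to 1$, the extraction of a spherically convergent subsequence via the normality direction of Marty's theorem, and the Ahlfors--Shimizu estimate $T_0(R,g)\le R^2/2$ giving order at most $2$ are all standard and correct.

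The genuine gap is exactly the point you flag yourself: for $\alpha\neq 0$ you never specify the functional to be maximized, and this is not a routine perturbation of the $\alpha=0$ case --- it is the entire content of Pang's extension. The difficulty is that the weight $(r^2-|z|^2)f_n^{\#}(z)$ controls $f_n^{\#}$, whereas what must be bounded is the spherical derivative of $\rho_n^{-\alpha}f_n(z_n+\rho_n\zeta)$, namely $\rho_n^{1-\alpha}|f_n'|/\bigl(1+\rho_n^{-2\alpha}|f_n|^2\bigr)$; one has to maximize a quantity of the shape $(r^2-|z|^2)^{1+\alpha}|f_n'(z)|/\bigl((r^2-|z|^2)^{2\alpha}+|f_n(z)|^2\bigr)$ and then verify both that this maximum still tends to infinity and that the rescaled spherical derivatives are bounded by $1+o(1)$ with value $1$ at the origin; the hypothesis $-1<\alpha<1$ enters precisely in making these verifications work without any assumption on the multiplicities of zeros and poles. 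Since the present paper applies the lemma only with the nonzero exponents $\alpha=-k/(n+m)$, $\alpha=-(p+k)/(n+m)$ and $\alpha=-k/(n_1+n_2+m)$, the case you leave open is the only case actually used, so the proposal as written does not establish the statement; it reduces it to the very result it is meant to prove.
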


\begin{lemma}\cite[Lemma 2.6, p.107]{zeng-1} \label{lemma2}Let $R=\frac{A}{B}$ be a rational function and $B$ be non constant. Then $(R^{(k)})_{\infty}\leq (R)_{\infty}-k$,  where $(R)_{\infty}$=$deg(A)-deg(B)$.
\end{lemma}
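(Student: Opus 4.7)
My plan is to read the inequality off the Laurent expansion of $R$ at the point at infinity; this bypasses all bookkeeping with the quotient rule. Fix $r > 0$ large enough that every finite pole of $R$ lies in $\{|z| \le r\}$. On the exterior $\{|z| > r\}$ the rational function $R$ admits a convergent Laurent expansion
\begin{equation*}
R(z) \;=\; \sum_{n \le d} c_n\, z^n, \qquad d := (R)_\infty = \deg A - \deg B, \quad c_d \neq 0.
\end{equation*}
One produces this in the standard way: polynomial long division writes $A = QB + S$ with $\deg S < \deg B$, so $R = Q(z) + S(z)/B(z)$, where $Q$ is a polynomial of degree $d$ (when $d \ge 0$) and the proper remainder $S/B$ expands as a power series in $1/z$ containing no positive powers of $z$.

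Since the Laurent series converges uniformly on compacta in $\{|z| > r\}$, it may be differentiated termwise, yielding
\begin{equation*}
R^{(k)}(z) \;=\; \sum_{n \le d} c_n\, n(n-1)\cdots(n-k+1)\, z^{n-k}, \qquad |z| > r.
\end{equation*}
The largest exponent of $z$ that can possibly occur on the right is $d - k$, so by the definition of $(\cdot)_\infty$ we obtain $(R^{(k)})_\infty \le d - k = (R)_\infty - k$, which is the claim.

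The only point requiring care is to confirm that the quantity $(R^{(k)})_\infty$ read off from the Laurent expansion at $\infty$ actually agrees with the definition $\deg(\text{numerator}) - \deg(\text{denominator})$ used in the lemma statement, and that both are independent of the rational representation chosen. This is routine: multiplying numerator and denominator of a quotient by any common polynomial shifts the two degrees by the same amount, so their difference is invariant and coincides with the exponent of the leading term at $\infty$. No serious obstacle is expected. As an aside, the inequality is strict precisely when $d \in \{0, 1, \ldots, k-1\}$, for then the falling factorial $d(d-1)\cdots(d-k+1)$ in the leading coefficient vanishes and the polynomial part $Q$ is annihilated after $k$ differentiations; in every other case equality holds.
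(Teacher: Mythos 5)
Your proof is correct. Note that the paper itself offers no proof of this lemma --- it is quoted verbatim from Zeng--Lahiri \cite{zeng-1} --- so there is no in-paper argument to compare against; your write-up is in effect supplying the missing proof. The Laurent-expansion-at-infinity argument is clean and complete: the reduction $R=Q+S/B$ with $\deg S<\deg B$ correctly identifies the top exponent $d=(R)_\infty$, termwise differentiation of a Laurent series on $\{|z|>r\}$ is legitimate, and your closing observation that $(\cdot)_\infty$ is independent of the representation (and agrees with the leading exponent at $\infty$) is exactly the point that needs saying and is argued correctly. The likely ``intended'' proof is the more pedestrian induction on $k$ via the quotient rule, $R'=(A'B-AB')/B^2$, together with $\deg(A'B-AB')\le \deg A+\deg B-1$ (the top-degree terms cancel); your route buys a one-line explanation of \emph{why} the drop by $k$ occurs and, as a bonus, the exact characterization of when the inequality is strict. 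The only loose end worth a sentence is the degenerate case where $B$ divides $A$, so that $R$ is actually a polynomial and $R^{(k)}$ may vanish identically; there one needs the convention $\deg 0=-\infty$ (or the tacit assumption that $A/B$ is in lowest terms, which is how the lemma is applied in this paper, to functions with genuine poles). This does not affect the validity of your argument in the cases that matter.
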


\begin{lemma}\label{lemma3} Let $n\geq2$, $m\geq k\geq 1$ be the positive integers. Let $a(z)(\not\equiv 0)$ be a polynomial of degree $p$ such that $p\leq n-2$. Then there is no function $f$ rational on $\mathbb{C}$ which has only poles of multiplicity at least $p+1$ such that
$f^{n}(z)(f^{m})^{(k)}(z) \neq a(z)$ and $ (f^{m})^{(k)}(z) \neq 0$.
\end{lemma}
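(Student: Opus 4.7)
I argue by contradiction, supposing some rational $f$ satisfies the hypotheses. If $f$ is a polynomial, then $(f^m)^{(k)}$ is a polynomial with no zeros, hence a nonzero constant; this forces $\deg f=1$ and $m=k$. Then $f^n(f^m)^{(k)} - a(z)$ is a polynomial of degree $n\geq 2$ (since $\deg a = p \leq n-2$), which must have a zero by the Fundamental Theorem of Algebra, contradicting $f^n(f^m)^{(k)} \neq a(z)$.

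Assume instead that $f$ is rational but not a polynomial. Write $f = P/Q$ in lowest terms with $Q(z) = \prod_{i=1}^s (z-z_i)^{q_i}$, $q_i \geq p+1$, and set $\alpha := \deg P$, $\beta := \sum_i q_i$. Since $G := (f^m)^{(k)}$ has no zeros and its only poles are the $z_i$, with multiplicities $mq_i+k$,
\[
G(z) = \frac{c}{\prod_i (z-z_i)^{mq_i+k}}, \qquad c \in \mC^*.
\]
Thus $f^n G = cP^n/R$ where $R(z) := \prod_i (z-z_i)^{(n+m)q_i+k}$. Applying the same reasoning to $f^n G - a$ (zero-free, with the same poles of the same orders) gives $f^n G - a = d/R$ for some $d \in \mC^*$; clearing denominators yields the polynomial identity
\[
c P(z)^n - a(z)\, R(z) = d.
\]

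I would then invoke the Mason--Stothers (polynomial ABC) theorem on the equation $cP^n + d = aR$. The three summands are pairwise coprime (any common factor would divide the nonzero constant $d$) and not all constant, and their combined number of distinct zeros is at most $\alpha + p + s$. Mason--Stothers therefore gives $n\alpha \leq \alpha + p + s - 1$, i.e.\ $(n-1)\alpha \leq p + s - 1$. Matching degrees in the identity above forces $n\alpha = p + (n+m)\beta + ks$, which, combined with $\beta \geq (p+1)s$, yields $n\alpha \geq p + [(n+m)(p+1) + k]s$. Reconciling the two bounds on $n\alpha$ (multiply the first by $n/(n-1)$ and rearrange) produces
\[
p + s\bigl(n - (n-1)[(n+m)(p+1) + k]\bigr) \;\geq\; n.
\]

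The bracketed coefficient is at most $n - (n-1)(n+2) = 2 - n^2 \leq -2$ (using $m,k \geq 1$, $p \geq 0$, $n \geq 2$), so with $s \geq 1$ the left-hand side is bounded above by $p - 2 \leq n - 4 < n$, contradicting the displayed inequality. The main obstacle is engineering the polynomial identity $cP^n - a R = d$ cleanly from the hypotheses and verifying the pairwise coprimality required for Mason--Stothers; once these are in place, the concluding numerical step is immediate.
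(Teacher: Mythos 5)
Your proof is correct, but it follows a genuinely different route from the paper's. The paper never uses the hypothesis $(f^m)^{(k)}\neq 0$ to its full strength in the rational case: it keeps a general numerator polynomial $g(z)$ in $(f^m)^{(k)}$, bounds $\deg g$ via the lemma $(R^{(k)})_\infty\leq (R)_\infty-k$, differentiates $f^n(f^m)^{(k)}-a$ exactly $p+1$ times to annihilate $a(z)$, and then plays two degree counts against each other to reach $s<s$. You instead observe that zero-freeness forces $(f^m)^{(k)}=c\prod(z-z_i)^{-(mq_i+k)}$ and $f^n(f^m)^{(k)}-a=d/R$ outright, which collapses everything to the single polynomial identity $cP^n-aR=d$, and you finish with Mason--Stothers. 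Both endgames are sound (I checked your numerics: $(n-1)\alpha\leq p+s-1$ from Mason--Stothers, $n\alpha=p+(n+m)\beta+ks\geq p+[(n+m)(p+1)+k]s$ from degree matching, and the coefficient $n-(n-1)[(n+m)(p+1)+k]\leq 2-n^2\leq -2$ does force $p+s(\cdots)\leq p-2<n$, contradicting $\geq n$; the coprimality and ``not all constant'' hypotheses hold since $d\neq 0$ and $\deg(aR)\geq(n+m)(p+1)+k>0$). What your approach buys is transparency: the Fermat-type identity makes the contradiction a one-line radical count, and the $p+1$-fold differentiation device disappears entirely. What it costs is the external input of Mason--Stothers and a heavier reliance on the exact hypothesis $(f^m)^{(k)}\neq 0$; the paper's bookkeeping with the unknown numerator $g$ is clumsier but would degrade more gracefully if that hypothesis were weakened to allow controlled zeros. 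Your polynomial case is also slightly sharper than the paper's (you correctly pin down $\deg f=1$ and $m=k$ before invoking the Fundamental Theorem of Algebra).
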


\begin{proof} First we consider the case of a polynomial. Suppose on the contrary that there is a polynomial $f(z)$ with the given properties. Since $(f^{m})^{(k)}\neq 0$ and $m \geq k$, $f$ has zeros of multiplicity exactly one. So, we have
$$\text{deg}(f^n (f^m )^{(k)}) \geq n\text{deg}(f)=n>p=\text{deg}(a(z))$$
Therefore, $f^{n}(z)(f^{m})^{k}(z)- a(z)$ has a solution, which is a contradiction.\\
Next, suppose that $f$ has poles. Then, we set
\begin{equation}f(z)= A \frac{\prod\limits_{i=1}^{s}{(z-{{\alpha }_{i}})}}{\prod\limits_{j=1}^{t}{{{(z-{{\beta }_{j}})}^{{{n}_{j}}}}}},
\end{equation}
where $A \neq 0, ~\alpha_i $ are the distinct zeros of $f$ with $s\geq 0$ and $\beta_j $ are the distinct poles of $f$ with $t\geq 1$.\\
Put
$$\sum\limits_{j=1}^{t}{ n_j}=N.$$
Then $$N\geq t(p+1).$$
Now,
\begin{align}
f^m (z)&=A^m \frac{\prod\limits_{i=1}^{s}{(z-{{\alpha }_{i}})^{m}}}{\prod\limits_{j=1}^{t}{{{(z-{{\beta }_{j}})}^{{{mn}_{j}}}}}}\\
\Rightarrow(f^m)^{(k)}(z)&= \frac{\prod\limits_{i=1}^{s}{(z-{{\alpha }_{i}})^{m-k}}}{\prod\limits_{j=1}^{t}{{{(z-{{\beta }_{j}})}^{{{mn}_{j} +k}}}}}g(z),
\end{align}
where $g(z)$ is a polynomial.\\
By Lemma \ref{lemma2}, we have
\begin{align*}
&(f^m)_{\infty} ^{(k)} \leq (f^m)_\infty -k \\
\Rightarrow &deg (g)\leq k(s+t-1).
\end{align*}
Now,
\begin{align}\label{1}
f^n (f^m)^{(k)}&=A^n\frac{\prod\limits_{i=1}^{s}{(z-{{\alpha }_{i}})^{(m+n)-k}}}{\prod\limits_{j=1}^{t}{{{(z-{{\beta }_{j}})}^{{{(m+n)n}_{j}+k}}}}}g(z).
\end{align}
So,
\begin{align}\label{2}
(f^n (f^m)^{(k)})^{(p+1)}&=\frac{\prod\limits_{i=1}^{s}{(z-{{\alpha }_{i}})^{(m+n)-k-p-1}}}{\prod\limits_{j=1}^{t}{{{(z-{{\beta }_{j}})}^{{{(m+n)n}_{j} +k+p+1}}}}}g_0(z),
\end{align}
where  $g_0 (z)$ is a polynomial.\\
Again, by Lemma \ref{lemma2}, we have
\begin{align*}
(f^n (f^m)^{(k)})^{(p+1)} _\infty  &\leq  (f^n (f^m)^{(k)})_\infty -(p+1) \\
\Rightarrow deg(g_0) &\leq(s+t-1)(p+k+1). 
\end{align*}
Since $ f^n (f^m)^{(k)}\neq a(z)$, we set
\begin{align}\label{3}
f^n (f^m)^{(k)}&= a(z) + \frac{c}{\prod\limits_{j=1}^{t}{{{(z-{{\beta }_{j}})}^{{{(m+n)n}_{j} +k}}}}}, \end{align}
 where $ c\neq0 $ is a constant.\\
So,
\begin{align} \label{4}
 (f^n (f^m)^{(k)})^{(p+1)}& =\frac{g_1(z)}{\prod\limits_{j=1}^{t}{{{(z-{{\beta }_{j}})}^{{{(m+n)n}_{j} +k+p+1}}}}},
\end{align}
where $g_1 (z)$ is a polynomial of degree at most $ (p+1)(t-1)$.\\
On comparing (\ref{1}) and (\ref{3}), we have
\begin{align*}
s(m+n)-ks+deg(g)&=N(m+n)+kt+pt \\
\Rightarrow N(m+n) &\leq s(m+n)-k  \\
\Rightarrow N &<s,\end{align*} for $n\geq 2,m\geq k\geq 1$.\\
Also, from (\ref{2}) and (\ref{4}),  we have
         $$deg(g_1)\geq s(m+n)-s(k+p+1).$$
Now,
\begin{align*}
(p+1)(t-1)&\geq \text{deg} (g_1 (z))\geq s(m+n)-s(k+p+1) \\
\Rightarrow s(m+n)&\leq (p+1)(t-1)+s(k+p+1)  \\
\Rightarrow s(m+n)& < (p+1)t+s(k+p+1)  \\
\Rightarrow s&< \frac{p+1}{m+n} t+ \frac{k+p+1}{m+n}s  \\
\Rightarrow s&< \frac{1}{m+n} N + \frac{k+p+1}{m+n}s  \\
\Rightarrow s&< \left(\frac{1}{m+n} + \frac{k+p+1}{m+n} \right)s  \\
\Rightarrow s&< \left( \frac{k+p+2}{m+n}\right)s \\
\Rightarrow s&<s~~~~~\left(\because \frac{k+p+2}{m+n}\leq 1 \right),
\end{align*}
which is absurd.\\
Thus, if $(f^m)^{(k)}(z)\neq 0$, then $f^n (z) (f^m )^{(k)}(z)-a(z)$ has at least a solution. Hence the Lemma follows.
\end{proof}

\begin{lemma}\label{lemma4} Let $ n\geq 2,m\geq k\geq 1$ be the positive integers. Then there is no transcendental meromorphic function $f$ on $\mathbb{C}$ such that $f^{n}(z)(f^{m})^{(k)}(z) \neq a(z)$ and $(f^{m})^{(k)}(z) \neq 0$, where $a(z)\not\equiv 0$ is a small function of $f$.\end{lemma}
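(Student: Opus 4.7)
The strategy is a proof by contradiction using Nevanlinna theory: assuming the existence of a transcendental meromorphic $f$ on $\mC$ with $f^{n}(f^{m})^{(k)}\neq a$ and $(f^{m})^{(k)}\neq 0$ everywhere, I would derive $T(r,f)=S(r,f)$. First I would use the hypothesis $(f^{m})^{(k)}\neq 0$ to pin down the zero structure of $f$. If $f(z_{0})=0$ with order $s\geq 1$, then near $z_{0}$, $f^{m}(z)=(z-z_{0})^{ms}\cdot(\text{unit})$, so $(f^{m})^{(k)}$ vanishes to order $ms-k$ whenever $ms\geq k$; but $ms\geq m\geq k$ always holds, so only the equality case $ms=k$ is permitted, forcing $m=k$ and $s=1$. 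Hence either $f$ has no zeros (when $m>k$), or $m=k$ and every zero of $f$ is simple.

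Now set $\phi:=f^{n}(f^{m})^{(k)}$. Since $(f^{m})^{(k)}$ is nowhere zero, all zeros of $\phi$ lie at zeros of $f$, and all poles of $\phi$ lie at poles of $f$. Nevanlinna's second fundamental theorem applied to $\phi$ at the three targets $0,a,\infty$, together with $\phi-a$ having no zeros, gives
\[
T(r,\phi)\leq\overline{N}(r,f)+\overline{N}(r,1/f)+S(r,f).
\]
For a matching lower bound I would use the identity $f^{n+m}=\phi\cdot f^{m}/(f^{m})^{(k)}$; the logarithmic derivative lemma gives $m(r,(f^{m})^{(k)}/f^{m})=S(r,f)$, while a direct count shows the poles of $(f^{m})^{(k)}/f^{m}$ are of order exactly $k$ at each zero and at each pole of $f$. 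This yields
\[
(n+m)T(r,f)\leq T(r,\phi)+k\overline{N}(r,f)+k\overline{N}(r,1/f)+S(r,f).
\]
In the case $m>k$, $\overline{N}(r,1/f)=0$, and combining immediately gives $(n+m-k-1)T(r,f)\leq S(r,f)$, a contradiction since $n+m-k-1\geq 2$.

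The main obstacle is the borderline case $m=k$: the two preceding inequalities combine only to $(n+k)T(r,f)\leq 2(k+1)T(r,f)+S(r,f)$, which fails when $n\leq k+2$. To close the gap I would exploit two extra multiplicity facts built into $\phi$. At each simple zero of $f$, $\phi$ has a zero of order exactly $n$, so $T(r,\phi)\geq N(r,1/\phi)=n\overline{N}(r,1/f)+O(1)$. At each pole of $f$ of order $p\geq 1$, $\phi$ has a pole of order $(n+k)p+k\geq n+2k$, so $T(r,\phi)\geq N(r,\phi)\geq(n+2k)\overline{N}(r,f)+O(1)$. Combined with the second-main-theorem upper bound, these produce the coupled system
\[
(n-1)\overline{N}(r,1/f)\leq\overline{N}(r,f)+S(r,f),\qquad (n+2k-1)\overline{N}(r,f)\leq\overline{N}(r,1/f)+S(r,f),
\]
whose crossed coefficients satisfy $(n-1)(n+2k-1)\geq 3>1$ for every admissible $n\geq 2,\,k\geq 1$; iterating once forces both $\overline{N}(r,f)$ and $\overline{N}(r,1/f)$ to reduce to $S(r,f)$. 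Feeding these back into the lower-bound inequality then yields $T(r,f)=S(r,f)$, contradicting the transcendence of $f$.
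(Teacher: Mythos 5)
Your proof is correct and follows essentially the same route as the paper: both arguments apply the second fundamental theorem for three small functions to $F=f^{n}(f^{m})^{(k)}$ at the targets $0$, $a$, $\infty$, count pole and zero multiplicities of $F$, and recover $T(r,f)$ from $T(r,F)$ via the factorization $f^{n+m}=F\cdot f^{m}/(f^{m})^{(k)}$ together with the lemma on the logarithmic derivative. The only difference is the endgame bookkeeping: the paper first extracts $\overline{N}(r,f)=S(r,f)$ from the averaged bound $T(r,F)\geq\tfrac{1}{2}\left[N(r,F)+N(r,1/F)\right]$ and then estimates $m(r,1/f^{m+n})$ to reach $(n-1)T(r,f)\leq S(r,f)$ uniformly in $m\geq k$, whereas you split into $m>k$ and $m=k$ and, in the borderline case, solve a coupled system to force both $\overline{N}(r,f)$ and $\overline{N}(r,1/f)$ into $S(r,f)$ --- both versions are valid.
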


\begin{proof}  Suppose on the contrary that there is a transcendental meromorphic function $f$ on $\mathbb{C}$ satisfying the given conditions. Since $(f^m)^{(k)} \neq0$ and $m\geq k$, $f$ has zeros of multiplicity exactly one. Now, by second fundamental theorem of Nevanlinna for three small functions\cite[Theorem 2.5, p.47]{hayman-1}, we have
\begin{align}
T(r,f^n (f^m)^{(k)} )&\leq \overline{N} (r,f^n (f^m)^{(k)})+\overline{N} \left(r,\frac{1}{f^n (f^m)^{(k)}}\right)+\overline{N} \left(r,\frac{1}{f^n (f^m)^{(k)}-a(z)}\right) \notag \\
&=\overline{N}(r,f)+\overline{N}\left(r,\frac{1}{f}\right)+S(r,f).\label{5}
\end{align}
Also,
\begin{align}
T(r,f^n (f^m)^{(k)} )&\geq \frac{1}{2}\left[N(r,f^n (f^m)^{(k)})+N\left(r,\frac{1}{f^n (f^m)^{(k)}}\right)  \right] \notag \\
&\geq   \frac{n+m+k }{2}\overline{N}(r,f)+\overline{N}\left(r,\frac{1}{f}\right)+S(r,f). \label{6}
\end{align}
Thus, from (\ref{5}) and (\ref{6}), we get
\begin{align}
\frac{n+m+k }{2}\overline{N}(r,f)   &\leq   \overline{N}(r,f)+  S(r,f) \notag \\
\Rightarrow \overline{N}(r,f)&= S(r,f).\label{7}
\end{align}
Next,\begin{align}
(m+n)T(r,f)&=T(r,f^{m+n}) \notag  \\
&=T\left(r,\frac{1}{f^{m+n}}\right) +O(1) \notag  \\
&=m\left(r,\frac{1}{f^{m+n}}\right)+N\left(r,\frac{1}{f^{m+n}}\right)+O(1) \notag \\
&=m\left(r,\frac{(f^m)^{(k)}}{f^{m}} \frac{1}{f^n (f^m)^{(k)}}\right)+N\left(r,\frac{1}{f^{m+n}}\right)+O(1) \notag \\
&\leq m\left(r,\frac{1}{f^n (f^m)^{(k)}}\right)+N\left(r,\frac{1}{f^{m+n}}\right)+O(1) \notag \\
&\leq T(r,f^n (f^m)^{(k)})-N\left(r,\frac{1}{f^n (f^m)^{(k)}}\right)+N\left(r,\frac{1}{f^{m+n}}\right)+S(r,f).\label{8}
\end{align}
Now, substituting (\ref{5}) and (\ref{7}) in (\ref{8}), we get													
\begin{align*}
(m+n)T(r,f)&\leq \overline{N}\left(r,\frac{1}{f}\right)-N \left(r,\frac{1}{f^n (f^m)^{(k)}}\right)+N\left(r,\frac{1}{f^{m+n}}\right)+S(r,f)\\
&\leq \overline{N}\left(r,\frac{1}{f}\right)-n\overline{N}\left(r,\frac{1}{f} \right)+(m+n)\overline{N}\left(r,\frac{1}{f}\right)+S(r,f)\\
&=(m+1)\overline{N}\left(r,\frac{1}{f}\right)+S(r,f)\\
&\leq(m+1)T(r,f)+S(r,f)\\
\Rightarrow(n-1)T(r,f)&\leq S(r,f),
\end{align*}
which is a contradiction, for $n\geq 2$.\\
However, if $f$ has no zeros, then $f^n(f^m)^{(k)}$ has no zeros. \\That is,
$$ N\left(r,\frac{1}{f}\right)=S(r,f) ~\text{and}~N\left(r,\frac{1}{f^n (f^m)^{(k)}}\right)=S(r,f).$$
Thus, by the same argument used above, we get a contradiction.
\end{proof}

\begin{lemma} \cite{charak-2}\label{lemma5} Let $f$ be a transcendental meromorphic function and $n,m>k$ be the positive integers. Let $F=f^n (f^m )^{(k)}$. Then $$\left[ \frac{k}{2(2k+2)}+o(1) \right ] T(r,F)\leq \overline{N} \left(r,\frac{1}{F-\omega}\right) +S(r,F)$$ for any small function $\omega(\not\equiv 0,\infty$) of $f$.
\end{lemma}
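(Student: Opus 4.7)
The plan is to apply Nevanlinna's second fundamental theorem to $F$ with the three small targets $0$, $\infty$, $\omega$, and then to show that the ``ramification" counting functions $\overline{N}(r,F)+\overline{N}(r,1/F)$ absorb at most $(1-k/(2(2k+2)))T(r,F)+S(r,F)$, so that what remains on the left is forced onto the counting function of $F-\omega$.

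First I would establish the comparability $T(r,F)\asymp T(r,f)$ so that $\omega$ is also a small function of $F$ and $S(r,F)=S(r,f)$. Write $F=f^{n+m}\cdot G$ with $G:=(f^m)^{(k)}/f^m$. The logarithmic derivative lemma gives $m(r,G)=S(r,f)$, and a direct pole/zero count shows $N(r,G)\le k\bigl(\overline{N}(r,f)+\overline{N}(r,1/f)\bigr)$, so $T(r,G)=O(T(r,f))$. The upper bound $T(r,F)\le(n+m+k)T(r,f)+S(r,f)$ follows from Mohon'ko, and the lower bound $(n+m-2k)T(r,f)\le T(r,F)+S(r,f)$ follows by writing $f^{n+m}=F/G$; since $n,m>k$ forces $n+m-2k\ge 2$, this is nontrivial.

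Next, apply Nevanlinna's second main theorem to $F$ with targets $0$, $\infty$, $\omega$:
\begin{equation*}
T(r,F)\le \overline{N}(r,F)+\overline{N}\bigl(r,1/F\bigr)+\overline{N}\bigl(r,1/(F-\omega)\bigr)+S(r,F).
\end{equation*}
The essential multiplicity facts are that at a pole of $f$ of order $q\ge 1$, $F$ has a pole of order $(n+m)q+k\ge n+m+k$, while at a zero of $f$ of order $q\ge 1$ (using $m>k$) $F$ has a zero of order $(n+m)q-k\ge n+m-k$. Hence
\begin{equation*}
\overline{N}(r,F)\le \tfrac{1}{n+m+k}\,N(r,F),\qquad \overline{N}_{f=0}\bigl(r,1/F\bigr)\le \tfrac{1}{n+m-k}\,N(r,1/F).
\end{equation*}
The only other zeros of $F$ come from zeros of $(f^m)^{(k)}$ that are neither zeros nor poles of $f$, i.e.\ zeros of $G$ at regular points of $f$; these are bounded by $T(r,G)+O(1)=O(\overline{N}(r,f)+\overline{N}(r,1/f))+S(r,f)$.

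Combining these estimates with the two-sided comparability from the first step and specializing to the worst case $n=m=k+1$ (which is where the denominator $n+m-2k=2$ saturates), one arrives at $\overline{N}(r,F)+\overline{N}(r,1/F)\le \bigl(1-\tfrac{k}{2(2k+2)}+o(1)\bigr)T(r,F)+S(r,F)$, and substituting into the SMT yields the stated inequality. The main obstacle is the sharp numerical bookkeeping required to recover precisely the constant $k/(2(2k+2))$: a crude estimate on the ``extra" zeros of $(f^m)^{(k)}$ away from the zeros of $f$ produces a weaker constant, so one must track the joint contributions $N(r,F)+N(r,1/F)\le 2T(r,F)+O(1)$ in a correlated way rather than bounding each term separately, and one must exploit the full strength of the multiplicity estimates above together with the tight lower bound on $T(r,f)$ in terms of $T(r,F)$.
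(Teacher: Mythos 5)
First, a structural point: the paper does not prove this lemma at all --- it is imported verbatim from the unpublished manuscript [Charak--Sharma, \emph{A value distribution result leading to normality criteria}], so there is no in-paper proof to compare you against and your argument has to stand on its own. Your skeleton is the standard and correct one: establish $T(r,F)\asymp T(r,f)$ via $F=f^{n+m}G$ with $G=(f^m)^{(k)}/f^m$, apply the three-small-functions second main theorem to $F$ with targets $0,\infty,\omega$, and control $\overline{N}(r,F)+\overline{N}(r,1/F)$ using the multiplicity counts $(n+m)q+k$ at poles and $(n+m)q-k$ at zeros of $f$ together with $N(r,1/G)\le N(r,G)+S(r,f)=k\bigl(\overline{N}(r,f)+\overline{N}(r,1/f)\bigr)+S(r,f)$ for the remaining zeros.

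The genuine gap is that you defer the entire quantitative content of the lemma --- the emergence of a \emph{positive} constant --- to ``sharp numerical bookkeeping,'' and the one concrete mechanism you propose for it is the one that fails. You assert that the argument closes by tracking the joint bound $N(r,F)+N(r,1/F)\le 2T(r,F)+O(1)$ ``rather than bounding each term separately''; this is backwards. The sum bound is strictly weaker than the pair of individual First Main Theorem bounds, and under the sum bound together with $N(r,1/G)\le k(\overline{N}(r,f)+\overline{N}(r,1/f))+S(r,f)$ the configuration $\overline{N}(r,f)=0$, $\overline{N}(r,1/f)=\tfrac{2}{n+m}T(r,F)$, $N(r,1/G)=\tfrac{2k}{n+m}T(r,F)$ is admissible and gives $\overline{N}(r,F)+\overline{N}(r,1/F)=\tfrac{2k+2}{n+m}T(r,F)$, which equals $T(r,F)$ in the extremal case $n=m=k+1$; the second main theorem then yields only $0\cdot T(r,F)\le \overline{N}\bigl(r,1/(F-\omega)\bigr)+S(r,F)$. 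What actually closes the argument is keeping the two constraints \emph{separate}: from $N(r,F)\le T(r,F)$ one gets $\overline{N}(r,F)\le\tfrac{1}{n+m+k}T(r,F)$, and from $(n+m-k)\overline{N}(r,1/f)+N(r,1/G)\le N(r,1/F)\le T(r,F)+O(1)$ combined with the Clunie-type bound on $N(r,1/G)$ one gets $\overline{N}(r,1/f)+N(r,1/G)\le\tfrac{2k+1}{n+m+k}T(r,F)+S(r,F)$; adding these and using $n+m\ge 2k+2$ gives $\overline{N}(r,F)+\overline{N}(r,1/F)\le\tfrac{2k+2}{3k+2}T(r,F)+S(r,F)$, which in fact produces the constant $\tfrac{k}{3k+2}$, stronger than the claimed $\tfrac{k}{2(2k+2)}$. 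So the lemma is reachable along your route, but only after replacing the step you identify as the crux with its opposite.
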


\begin{lemma} \cite{charak-2}\label{lemma6} Let $f$ be a rational function and $n,m>k$ be the positive integers. Then, for $a(\neq 0) \in \mathbb{C}$, $f^n (f^m )^{(k)}-a$ has at least two distinct zeros.
\end{lemma}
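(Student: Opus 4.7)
The plan is to argue by contradiction using a global preimage-counting argument for the rational self-map $F_0 := f^n(f^m)^{(k)} \colon \mathbb{P}^1 \to \mathbb{P}^1$. Suppose $F_0 - a$ has at most one distinct zero. Writing $f$ in standard form
\[
f(z) = A\,\frac{\prod_i (z-\alpha_i)^{s_i}}{\prod_j (z-\beta_j)^{t_j}}
\]
with $q$ distinct zeros (total multiplicity $s$) and $\tilde q$ distinct poles (total multiplicity $t$), one finds $F_0 = P_1/D$, where $D(z) = \prod_j (z-\beta_j)^{(n+m)t_j + k}$ and $P_1(z) = A^n \prod_i (z-\alpha_i)^{(n+m)s_i - k}\, h(z)$ for some polynomial $h$ that does not vanish at any $\alpha_i$ or $\beta_j$ (so the stated orders of zeros and poles of $F_0$ are exact). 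Since $m > k$ gives $ms_i - k \geq 1$, applying Lemma \ref{lemma2} to $R = f^m$ yields $\deg h \leq k(q + \tilde q - 1)$.

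Next I would view $F_0$ as a rational self-map of $\mathbb{P}^1$ of degree $d = \max(\deg P_1, \deg D)$ and apply the second fundamental theorem (equivalently, Riemann--Hurwitz) to the three distinct target values $0, a, \infty$:
\[
|F_0^{-1}(0)| + |F_0^{-1}(a)| + |F_0^{-1}(\infty)| \geq d + 2.
\]
The finite contributions satisfy $|F_0^{-1}(0) \cap \mC| \leq q + \deg h$ (zeros of $F_0$ lie among the $\alpha_i$ and the roots of $h$), $|F_0^{-1}(\infty) \cap \mC| = \tilde q$, and $|F_0^{-1}(a) \cap \mC| \leq 1$ by the contradiction hypothesis. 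Since $F_0(\infty)$ is a single point of $\mathbb{P}^1$, the combined ``$\infty$ contribution'' to the three preimage counts is at most $1$. Hence $d + 2 \leq q + \deg h + \tilde q + 2$, i.e., $d \leq q + \deg h + \tilde q$.

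Combining this with $d \geq \deg P_1 = (n+m)s - kq + \deg h$ and $d \geq \deg D = (n+m)t + k\tilde q$, and using $s \geq q$, $t \geq \tilde q$ together with $\deg h \leq k(q + \tilde q - 1)$, adding the two resulting inequalities yields
\[
(n+m-2k-2)\, q + (n+m-2)\, \tilde q \leq -k.
\]
Since $n, m \geq k+1$, both coefficients on the left are non-negative, with $(n+m-2) \geq 2k \geq 2$. For any nonconstant rational $f$, at least one of $q, \tilde q$ is positive, so the left side is non-negative; even in the borderline configuration $n = m = k+1$ with $\tilde q = 0$ it still equals $0 > -k$. This contradicts $-k \leq -1$.

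The main technical obstacle is the careful bookkeeping at $\infty \in \mathbb{P}^1$: using that $F_0(\infty)$ equals at most one of $\{0, a, \infty\}$, the three preimage sets receive a combined boost of at most $1$ from infinity, and this is precisely what makes the derived bound $d \leq q + \deg h + \tilde q$ tight enough to close. A secondary subtlety, used in the estimate $|F_0^{-1}(0) \cap \mC| \leq q + \deg h$, is the observation that $h$ has no common zeros with any $\alpha_i$ or $\beta_j$; this follows from matching the exact orders of the zeros and poles of $F_0$ at those points.
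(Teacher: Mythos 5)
The paper does not actually prove this lemma: it is quoted from the manuscript \cite{charak-2} and used as a black box, so there is no in-paper proof to compare against. Judged on its own merits, your argument is correct, and it is genuinely different in spirit from the elementary ``write the exceptional function as a single partial fraction and compare degrees'' technique that the paper uses for the companion statement Lemma \ref{lemma3}. Your route packages all the counting into one application of Riemann--Hurwitz for the rational self-map $F_0=f^n(f^m)^{(k)}$ of $\mathbb{P}^1$ at the three values $0,a,\infty$; the key facts you need -- that $h$ does not vanish at the $\alpha_i$ or $\beta_j$ (because $ms_i>k$ forces the orders of zeros and poles of $(f^m)^{(k)}$ to be exactly $ms_i-k$ and $mt_j+k$), that $\deg h\le k(q+\tilde q-1)$, and that the point at infinity can inflate the three preimage counts by at most $1$ in total -- are all justified, and I have checked that adding your two inequalities does give $(n+m-2k-2)q+(n+m-2)\tilde q\le -k$, which is impossible since $n+m\ge 2k+2$ and $k\ge 1$. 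What this buys you is a single uniform contradiction covering the ``no zeros'' and ``exactly one zero'' cases at once, and with no case split between polynomials and genuinely rational $f$; the more pedestrian decomposition argument (as in Lemma \ref{lemma3}) avoids Riemann--Hurwitz but needs those case distinctions. Two small points you should make explicit: the lemma is false for constant $f$ (then $F_0\equiv 0$ and $F_0-a$ has no zeros), so non-constancy must be assumed -- and you should note that non-constant $f$ forces $F_0$ non-constant, so that $d\ge 1$ and Riemann--Hurwitz applies; and Lemma \ref{lemma2} as stated requires the denominator to be non-constant, so when $f$ is a polynomial the bound $\deg h\le k(q-1)$ needs the (trivial) direct computation $\deg\bigl((f^m)^{(k)}\bigr)=ms-k$ rather than a citation.
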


\begin{lemma}\cite{clunie-1}\label{lemma7} Let $f$ be an entire function. If the spherical derivative $f^{\#}$ is bounded in $\mathbb{C}$, then the order of $f$ is at most one.
\end{lemma}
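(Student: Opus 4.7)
My plan is to combine the Ahlfors--Shimizu area estimate (yielding a preliminary order bound) with a Lipschitz estimate that exploits the absence of poles. First, the assumption $f^{\#}(z)\leq M$ throughout $\mathbb{C}$ gives the spherical area bound
\[
A(t,f):=\frac{1}{\pi}\iint_{|z|\leq t}(f^{\#}(z))^{2}\,dx\,dy\leq M^{2}t^{2},
\]
and integrating $A(t,f)/t$ from $0$ to $r$ produces the Ahlfors--Shimizu characteristic $T_{0}(r,f)\leq \tfrac12 M^{2}r^{2}$. Since $T(r,f)=T_{0}(r,f)+O(1)$, this yields $T(r,f)=O(r^{2})$ and a coarse bound $\rho(f)\leq 2$; note that entirety is not used in this step.

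Next I would introduce the auxiliary function $v(z):=(1+|f(z)|^{2})^{-1/2}$. Because $f$ is holomorphic, a direct computation gives
\[
|\partial_{z}v(z)|=\frac{|f(z)||f'(z)|}{2(1+|f(z)|^{2})^{3/2}}\leq \tfrac12 f^{\#}(z)\leq \tfrac{M}{2},
\]
so $|\nabla v|\leq M$ and $v$ is globally $M$-Lipschitz on $\mathbb{C}$. Since $f$ has no poles, $v>0$ everywhere, and of course $v\leq 1$.

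Finally, combine the two ingredients to sharpen the order to at most one. If $|z_{0}|=r$ and $|f(z_{0})|=M(r,f)=R$, then $v(z_{0})\asymp 1/R$, and the Lipschitz estimate forces $v\leq 2/R$ on the disk $D(z_{0},c/R)$; hence $|f|\geq R/2$ there, so $1/f$ is holomorphic and nonvanishing on this disk with $|(1/f)'|\leq 2M$. Propagating this local control of $1/f$ across $\mathbb{C}$ (via Poisson--Jensen applied to $\log|f|$ on nested disks, or via a Phragm\'en--Lindel\"of argument in the half-planes where $|f|$ is large), together with the $O(r^{2})$ characteristic bound of the first step, should yield $\log M(r,f)=O(r)$ and hence $\rho(f)\leq 1$. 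The main obstacle is precisely this last step: the Lipschitz-derived disk on which $|f|\asymp M(r,f)$ has radius $\sim 1/M(r,f)$, which shrinks rapidly as $|f|$ grows, so a naive radial integration of the Lipschitz estimate is insufficient; the genuine work lies in gluing these shrinking local estimates into a global linear-type growth bound.
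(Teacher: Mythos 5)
The paper offers no proof of this lemma at all---it is quoted with a citation to Clunie and Hayman's 1966 paper---so the only question is whether your argument is complete and correct on its own terms. It is not: there is a genuine gap, and you have in fact flagged it yourself. Your first step (Ahlfors--Shimizu) is correct but yields only $T(r,f)=O(r^{2})$, i.e.\ order at most $2$, and your second step (the $M$-Lipschitz bound for $v=(1+|f|^{2})^{-1/2}$) is a correct pointwise computation. But the entire content of the lemma is the improvement from order $2$ to order $1$, and that step is left at the level of ``should yield $\log M(r,f)=O(r)$'' with two unexecuted suggestions. Neither works as stated: Poisson--Jensen on nested disks merely reproduces the $O(r^{2})$ bound you already have, and a Phragm\'en--Lindel\"of argument on the components of $\{\,|f|>R\,\}$ presupposes that those components lie in sectors or strips of controlled opening, which is precisely what would have to be proved. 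As you observe, the disks on which the Lipschitz estimate keeps $|f|\asymp M(r,f)$ have radius of order $1/M(r,f)$, so no chaining or radial integration of the pointwise estimate can close the argument.

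A concrete test shows the gap is essential rather than cosmetic: the Weierstrass function $\wp$ has bounded spherical derivative (its spherical derivative is continuous and doubly periodic) and has order exactly $2$. It satisfies everything you actually establish in your first two steps---the quadratic area bound and the global Lipschitz property of $v$; only the strict positivity $v>0$ fails, and you never use that quantitatively. Hence any correct completion must exploit the absence of poles in a far more substantial way. In Clunie--Hayman this is done by a genuinely global analysis of the components of $\{\,|f|>1\,\}$, on which $\log|f|$ is a positive harmonic function and $1/f$ is a nonvanishing, $2M$-Lipschitz holomorphic function of modulus $1$ on the boundary; that global argument is the theorem, and it is the part missing from your proposal.
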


\section{\textbf{Proof of Theorems}}
		
\begin{proof} [\textbf{Proof of Theorem 1.1.}]
Suppose that $\mathcal{F}$ is not normal at some point $z_o\in D$. We assume $D=\mD$. Then by Lemma \ref{lemma1}, we can find a sequence $\left\{ f_j \right\}$ in $\mathcal{F}$, a sequence $\left\{ z_j\right\}$ of complex numbers with $z_j\rightarrow z_o$ and a sequence $\left\{\rho_j\right\}$ of positive real numbers with $\rho_j \rightarrow 0$ such that
$$g_j (\zeta) =\rho_j ^{\frac{-k}{n+m}} f_j (z_j +\rho_j \zeta)$$
converges locally uniformly with respect to the spherical metric to a non-constant meromorphic function $g(\zeta)$ on $\mC$ having bounded spherical derivative.

\medskip

{\bf Claim}:
\begin{enumerate}
	\item $g^n (g^{m})^{(k)}\neq a $\\
	\item  $(g^m )^{(k)} \neq 0$
\end{enumerate}

\medskip
Suppose that $g^n (\zeta_o ) (g^{m})^{(k)} (\zeta_o )= a $. Then $g(\zeta_o)\neq\infty$  in some small neighborhood of $\zeta_o$.
Further, $g^n (g^m)^{(k)} \not\equiv a $. Suppose $g^n (g^m)^{(k)} \equiv a $. Since $g$ is a non-constant entire function without zeros, by Lemma \ref{lemma7}, we have $g(\zeta)=e^{c\zeta+d}$, where $c\neq0$ and $d$ are constants. Thus $$m^k c^k e^{(m+n)c\zeta+(m+n)d} \equiv a$$
which is impossible unless $(m+n)c=0$. Hence by Hurwitz theorem, there exist points $\zeta_j \rightarrow \zeta_o $ such that, for sufficiently large $j$, we have
$$a=g^n _j (\zeta_j ) (g^m _j )^{(k)} (\zeta_j) =f^n _j (\zeta_j + \rho_j \zeta_j ) (f^m _j )^{(k)} (\zeta_j + \rho_j \zeta_j).$$
By given condition, we have  
$$(f^m _j)^{(k)} (\zeta_j + \rho_j \zeta_j)=b, $$ 
and hence,
$$(g^m _j)^{(k)} (\zeta_j )= \rho_j ^{\frac{nk}{m+n}} (f^m _j)^{(k)} (z_j + \rho_j \zeta_j)=\rho_j ^{\frac{nk}{m+n}} b $$
$$\Rightarrow (g^m )^{(k)} (\zeta_o)= \underset{j\to \infty }{\mathop{\lim }}\, (g^m _j)^{(k)} (\zeta_j )  =0$$ \\
which contradicts that $g^n (\zeta_o ) (g^{m})^{(k)} (\zeta_o )= a\neq 0$. This proves claim (1).

\medskip
Now, suppose $(g^m )^{(k)} (\zeta_o)=0$ for some $\zeta_o\in\mathbb{C}$, then $g(\zeta_o) \neq \infty $ in some small neighborhood of $\zeta_o$. Further, $(g^m )^{(k)}\not \equiv 0$, otherwise, $g$ reduces to a constant since $m\geq k$.
Again, by Hurwitz theorem, there exist points $\zeta_j \rightarrow \zeta_o $ such that, for sufficiently large $j$, we have
\begin{align*} &(g^m _j)^{(k)} (\zeta_j )-\rho_j ^{\frac{nk}{m+n}} b =0  \\
&\Rightarrow\rho_j ^{\frac{nk}{m+n}} (f^m _j)^{(k)} (z_j + \rho_j \zeta_j)-\rho_j ^{\frac{nk}{m+n}} b=0  \\
&\Rightarrow(f^m _j)^{(k)} (z_j + \rho_j \zeta_j)=b. 
\end {align*}
Thus, by the given condition, we get 
$$f^n _j (z_j + \rho_j \zeta_j ) (f^m _j )^{(k)} (z_j + \rho_j \zeta_j)= a=g^n _j (\zeta_j ) (g^m _j )^{(k)} (\zeta_j)$$
$$\Rightarrow a =\underset{j\to \infty }{\mathop{\lim }}\,g^n _j (\zeta_j ) (g^m _j )^{(k)} (\zeta_j)=g^n (\zeta_o ) (g^m  )^{(k)} (\zeta_o)=0$$
which is a contradiction. This proves claim (2). \\
Claims (1) and (2) as established contradict Lemma \ref{lemma3} and Lemma \ref{lemma4}. Hence $\mathcal{F} $ is normal.
\end{proof}

\begin{proof} [\textbf{Proof of Theorem 1.2.}]
Suppose that $\mathcal{F}$ is not normal at some point $z_o \in D$. We assume $D=\mD$. We distinguish the following two cases:

   \medskip
\textbf{Case I}: $a(z_o)\neq 0$\\
Following the proof of Theorem 1.1, we arrive at a contradiction and hence $\mathcal{F}$ is normal in this case.

    \medskip
\textbf{Case II}: $a(z_o)= 0$\\
Without loss of generality, we assume that $z_o =0$. Further, we assume $a(z)=z^p a_1 (z) $, where $p$ is a positive integer and $a_1 (0)\neq 0$. We may take $a_1 (0)=1$.
Now, by Lemma \ref{lemma1}, we can find a sequence $\left\{ f_j \right\}$ in $\mathcal{F}$, a sequence $\left\{ z_j\right\}$ of complex numbers with $z_j\rightarrow 0$ and a sequence $\left\{\rho_j\right\}$ of positive real numbers with $\rho_j \rightarrow 0 $ such that $$g_j (\zeta) =\rho_j ^{- \frac{p+k}{n+m}} f_j (z_j +\rho_j \zeta)$$ converges locally uniformly with respect to the spherical metric to a non-constant meromorphic function $g(\zeta)$ on $\mC$ having bounded spherical derivative.\\\\
\textbf{Subcase I}: Suppose there exist a subsequence of $\frac{z_j}{\rho_j}$, we may take  $\frac{z_j}{\rho_j}$ itself, such that  $\frac{z_j}{\rho_j} \rightarrow \infty $ as $j \rightarrow \infty$.\\
Let $$G_j (\zeta) =\ z_j ^{- \frac{p+k}{n+m}} f_j (z_j +z_j \zeta).$$
Then, by the given condition $f^n (z)(f^{m})^{(k)}(z)=a(z) \Leftrightarrow  (f^{m})^{(k)}(z) =b(z)$, we have
$$G_j ^n (\zeta)(G_j ^{m})^{(k)} (\zeta)=(1+\zeta)^p a_1 (z_j + z_j \zeta) \Leftrightarrow (G_j ^{m})^{(k)} (\zeta)=z_j ^l b (z_j +z_j \zeta ),$$
where $$l=-\frac{m(p+k)}{n+m} +k>0.$$
Thus, by Case I, $\left\{G_j\right\}$ is normal on $\mD$ and $G_j\rightarrow G $ (say) on $\mD$. Hence, by Marty's theorem, there exist a compact subset E of $\mD$ and a constant M$>0$ such that
$$G_j ^\# (\xi)\leq M ~\text{for}~ \xi\in E.$$
Claim: $G^ \# (0)=0$. Suppose $G^ \# (0)\neq 0$. Then for $\zeta\in \mathbb{C}$, we have
\begin{align*}g^\# (\zeta) &= \underset{j\to \infty }{\mathop{\lim }}\ g_j ^\# (\zeta)  \\
&=\underset{j\to \infty }{\mathop{\lim }}\ \rho_j ^{- \frac{p+k}{n+m}} f_j ^\# (z_j +\rho_j \zeta)  \\
&=\underset{j\to \infty }{\mathop{\lim }}\   \left(\frac{z_j}{\rho_j}\right)^{ \frac{p+k}{n+m}} G_j ^\# \left(\frac{\rho_j}{z_j} \zeta \right)\\
&=\infty 
\end{align*}
which is a contradiction to the fact that $g$ has bounded spherical derivative.\\
Now, $G^ \# (0)=0 \Rightarrow G'(0)=0$.
For any $\zeta \in \mC$, we have
\begin{align*} g' _j (\zeta) & = \rho_j ^{-\frac{p+k}{n+m}+1} f' _j (z_j+\rho_j \zeta)\\
& = \left( \frac{\rho_j}{z_j}\right)^{-\frac{p+k}{n+m}+1} G'_j \left( \frac{\rho_j}{z_j} \zeta \right) \overset{\chi }{\mathop{\to }}\, 0 
\end {align*}
on $\mathbb{C}$ as $\frac{p+k}{n+m}<1$. Thus $g'(\zeta)\equiv 0$ implies that $g$ is constant and this is a contradiction.\\\\
\textbf{Subcase II}: Suppose there exist a subsequence of $\frac{z_j}{\rho_j}$, we may take  $\frac{z_j}{\rho_j}$ itself, such that  $\frac{z_j}{\rho_j}\rightarrow c $ as $j \rightarrow \infty$, where $c$ is a finite number.\\
Then, we have
$$H_j (\zeta) =\rho_j ^{- \frac{p+k}{n+m}} f_j (\rho_j \zeta)=g_j \left( \zeta -\frac{z_j}{\rho_j}\right)\overset{\chi }{\mathop{\to }}\, g(\zeta -c):=H(\zeta).$$
Thus, by the given condition, we have
$$H_j ^n (\zeta)(H_j ^{m})^{(k)} (\zeta)=\zeta^p a_1 (\rho_j \zeta) \Leftrightarrow (H_j ^{m})^{(k)} (\zeta)=\rho_j ^l b (\rho_j \zeta ),$$
where $$l=-\frac{m(p+k)}{n+m} +k>0.$$

\medskip
{\bf Claim}:
\begin{enumerate}
 \item $H^n (\zeta) (H^{m})^{(k)} (\zeta) \neq \zeta^p $ on $\mathbb{C}-\left\{ 0 \right\}$\\
 \item $(H^m )^{(k)} (\zeta) \neq 0$ on $\mathbb{C}-\left\{ 0 \right\}$
\end{enumerate}
\medskip
Suppose that $H^n (\zeta_o) (H^{m})^{(k)} (\zeta_o)= \zeta_o ^p $, $\zeta_o \neq 0$. Then, $H(\zeta_o)\neq\infty$ on some small neighborhood of $\zeta_o $. Further, $H^n (\zeta) (H^{m})^{(k)} (\zeta) \not\equiv \zeta^p $. If $H^n (\zeta) (H^{m})^{(k)} (\zeta)\equiv \zeta^p$, then $\zeta=0$ is the only possible zero of $H$.
If $H$ is a transcendental function, then, clearly $H^n (H^m )^{(k)}$ is also a transcendental function, which is not true.
If $H$ is a rational function and $\zeta=0$ is a zero of $H$, then $H$ is a polynomial. Thus, deg$(H^n (H^m )^{(k)}) \geq n$deg$(H) \geq n$, which is a contradiction to the fact that $H^n (\zeta) (H^{m})^{(k)} (\zeta)\equiv \zeta^p$, $p\leq n-2$.
By Hurwitz's theorem, there exist points $\zeta_j \rightarrow \zeta_o $ such that, for sufficiently large $j$, we have
\begin{align*}& H_j ^n (\zeta_j)(H_j ^{m})^{(k)} (\zeta_j)-\zeta_j ^p a_1 (\rho_j \zeta_j)=0\\
\Rightarrow &(H_j ^{m})^{(k)} (\zeta_j)-\rho_j ^l  b (\rho_j \zeta_j )=0.
\end{align*}
Thus,\begin{align*} (H^m)^{(k)}( \zeta_o)&= \underset{j\to \infty }{\mathop{\lim }}\ (H_j ^m)^{(k)} (\zeta_j) \\
&= \underset{j\to \infty }{\mathop{\lim }}\ \rho_j ^l  b (\rho_j \zeta_j )\\
&=0 
\end{align*}
which contradicts that $H^n (\zeta_o) (H^{m})^{(k)} (\zeta_o)= \zeta_o ^p \neq 0$ . This proves claim (1).
 
\medskip
Next, suppose $(H^m )^{(k)} (\zeta_o)=0$ for some $\zeta_o\in\mC-\left\{0 \right\}$. Then $H(\zeta_o) \neq \infty $ on some small neighborhood of $\zeta_o$. Further, $(H^m )^{(k)}\not \equiv 0$, otherwise, $H$ reduces to a constant since $m\geq k$.
Thus, by Hurwitz theorem, there exist points $\zeta_j \rightarrow \zeta_o $ such that, for sufficiently large $j$, we have
\begin{align*} &(H^m _j)^{(k)} (\zeta_j )-\rho_j ^l   b (\rho_j \zeta_j) =0  \\
\Rightarrow &H^n _j (\zeta_j ) (H^m _j )^{(k)} (\zeta_j) -\zeta_j ^p a_1(\rho_j \zeta_j )=0 
\end {align*}
and so
\begin{align*}
H^n  (\zeta_o) (H^m )^{(k)} (\zeta_o)&=\underset{j\to \infty }{\mathop{\lim }}\ H^n _j (\zeta_j ) (H^m _j )^{(k)} (\zeta_j) \\
&=\underset{j\to \infty }{\mathop{\lim }}\ \zeta_j ^p a_1(\rho_j \zeta_j )  \\
&=\zeta_o ^p 
\end{align*}  which is a contradiction. This proves claim (2).\\
Claims (1) and (2) as established contradict Lemma \ref{lemma3} and Lemma \ref{lemma4}. Hence $\mathcal{F}$ is normal.
\end{proof}

\begin{proof}[\textbf{Proof of Theorem 1.3.}]
Suppose that $\mathcal{F}$ is not normal at some point $z_0 \in D$. Then by Lemma \ref{lemma1}, we can find a sequence $\left\{ f_j \right\}$ in $\mathcal{F}$, a sequence $\left\{ z_j\right\}$ of complex numbers with $z_j\rightarrow z_o$ and a sequence $\left\{\rho_j\right\}$ of positive real numbers with $\rho_j \rightarrow 0$ such that $$g_j (\zeta) =\rho_j ^{\frac{-k}{n_1 +n_2 +m}} f_j (z_j +\rho_j \zeta)$$ converges locally uniformly with respect to the spherical metric to a non-constant meromorphic function $g(\zeta)$ on $\mC$ having bounded spherical derivative.
Now, by Lemma \ref{lemma5} and Lemma \ref{lemma6}, $g^n (\zeta) (g^m)^{(k)} (\zeta)-a$ has at least one zero for $n\geq1,m>k\geq 1$. Suppose that $g^n (\zeta_0)  (g^m)^{(k)} (\zeta_0)-a =0$ for some $\zeta_0 \in \mathbb{C}$. Clearly, $g(\zeta_0)\neq 0,\infty$ in some neighborhood of $\zeta_0$. Thus, we have
$$g^{n_1} (\zeta_0)  (g^m)^{(k)} (\zeta_0)-a g^{-n_2} (\zeta_0) =0,$$ where $ n=n_1 + n_2 \geq1$.\\
Now, in some neighborhood of $\zeta_0$, we have\\
$ g_j ^{n_1} (\zeta_0)  (g_j ^m)^{(k)} (\zeta_0)-a g_j ^{-n_2} (\zeta_0) - \rho_j ^{\frac{kn_2}{n+m}} b$ $$=\rho_j ^{\frac{kn_2}{n+m}} \left\{f_j ^{n_1} (\zeta_j + \rho_j \zeta_0)  (f_j ^m)^{(k)} (\zeta_j + \rho_j \zeta_0)
-a f_j ^{-n_2} (\zeta_j + \rho_j \zeta_0) - b\right\} $$
By Hurwitz's theorem, there exists a sequence $\zeta_j \rightarrow \zeta_0$ such that for all large values of $j$,
$$f_j ^{n_1} (\zeta_j + \rho_j \zeta_j)  (f_j ^m)^{(k)} (\zeta_j + \rho_j \zeta_j)-a f_j ^{-n_2} (\zeta_j + \rho_j \zeta_j)-b=0$$
Thus, by the assumption, if $\left| f_j (\zeta_j + \rho_j \zeta_j ) \right|\geq M$, then we have
$$\left | g_j (\zeta_j) \right| =  \rho_j ^{\frac{-k}{n+m}}\left| f_j (\zeta_j + \rho_j \zeta_j ) \right|  \geq \rho_j ^{\frac{-k}{n+m}}M.$$
Since $g_j (\zeta)$ converges uniformly to $g(\zeta)$ in some neighborhood of $\zeta_0$, for all large values of $j$ and for every $\epsilon >0$, we have
$$\left | g_j (\zeta) - g(\zeta) \right|<\epsilon ~~\text{~for all}~ \zeta ~\text{in that neighborhood of}~ \zeta_o.$$
Thus, in a neighborhood of $\zeta_o$, for all large values of $j$, we have
\begin{align*} \left|g(\zeta_j) \right| &\geq \left |g_j (\zeta_j ) \right| - \left | g(\zeta_j) - g_j (\zeta_j ) \right| \\
& > \rho_j ^{\frac{-k}{n+m}}M - \epsilon 
\end {align*}
which is a contradiction to the fact that $\zeta_0$ is not a pole of $g(\zeta)$.\\
Again, by the assumption, if $\left| (f_j ^m)^{(k)}(z_j + \rho_j \zeta_j)\right | \leq M$, then we have  
 $$\left| (g_j ^m)^{(k)}( \zeta_j)\right | =\rho_j ^{k-\frac{mk}{n_1 + n_2 + m}} \left| (f_j ^m)^{(k)}(z_j + \rho_j \zeta_j)\right | \leq\rho_j ^{k-\frac{mk}{n_1 + n_2 + m}} M $$
so that $$(g^m)^ {(k)} (\zeta_o) = \underset{j\to \infty }{\mathop{\lim }} (g_j ^m )^{(k)}(\zeta_j)=0 $$
which contradicts $g^n (\zeta_o) (g^m)^ {(k)} (\zeta_o) =a\neq 0$. Hence $\mathcal{F}$ is normal.

\end{proof}

\section{\textbf{Counterexamples to the converse of the Bloch's Principle}}
The Bloch's principle  as noted by Robinson \cite{robinson-1} is one of the twelve mathematical problems requiring further consideration; it is a heuristic principle in function theory. The Bloch's principle states that a family of holomorphic (meromorphic) functions satisfying a property $\mathcal P$ in a domain $D$ is likely to be a normal family if the property $\mathcal P$ reduces every holomorphic (meromorphic) function on $\mC$ to a constant. The Bloch's principle is not universally true, for example one can see \cite{rubel-1}.

\medskip 

The converse of the Bloch's Principle states that if a family of meromorphic functions satisfying a property $\mathcal P$ on an arbitrary domain $D$ is necessarily a normal family, then every meromorphic function on $\mathbb{C}$ with property $\mathcal P$ reduces to a constant. Like Bloch's principle, its converse is not true. For counterexamples one can see \cite{charak-1},\cite{lahiri-1},\cite{li-1},\cite{schiff-1},\cite{xu-1},\cite{yunbo-1}. In order to construct counterexamples to the converse, one needs to prove a suitable normality criterion. Here Theorem \ref{theorem3} is such a criterion. Infact, following is a direct consequence of Theorem \ref{theorem3}:
\begin{theorem}\label{theorem4}  Let $\mathcal{F}$ be a family of meromorphic functions in a domain $D$. Let $n_1, n_2, m>k\geq1 $  be the non-negative integers such that $n_1 + n_2  \geq 1 $. Suppose $\psi (z):= f^{n_1} (z) (f^m (z))^{(k)} -a f^{-n_2} (z) -b $, where $a(\neq 0),b \in\mathbb{C}$, has no zeros in $D$. Then $\mathcal{F}$ is normal in $D$.
\end{theorem}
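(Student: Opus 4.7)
The plan is to observe that Theorem \ref{theorem4} is an essentially immediate corollary of Theorem \ref{theorem3}: under the stronger hypothesis that $\psi(z)$ has no zeros in $D$ for every $f \in \mathcal{F}$, the conditional hypothesis of Theorem \ref{theorem3} becomes vacuously satisfied, so normality follows at once. There is no genuine obstacle here; the only step is verifying that the hypotheses match up correctly.

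More concretely, I would begin by fixing any positive constant, say $M = 1$. Theorem \ref{theorem3} assumes that there exists $M > 0$ such that, for each $f \in \mathcal{F}$, the implication
\begin{equation*}
\psi(z) = 0 \;\Longrightarrow\; |f(z)| \geq M \text{ or } |(f^m)^{(k)}(z)| \leq M
\end{equation*}
holds in $D$. Since the current hypothesis states that $\psi(z) \neq 0$ everywhere in $D$ for every $f \in \mathcal{F}$, the antecedent of this implication never occurs. Hence the implication is vacuously true with any choice of $M$, in particular with $M = 1$.

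Consequently, the hypotheses of Theorem \ref{theorem3} are satisfied, and Theorem \ref{theorem3} immediately yields the normality of $\mathcal{F}$ in $D$. Since the role of Theorem \ref{theorem4} in the paper is to supply a clean zero-free normality criterion to which one can attach transcendental counterexamples violating the converse of Bloch's principle, no further analytic machinery is needed at this stage; the substantive content has already been absorbed into the proof of Theorem \ref{theorem3} via Lemmas \ref{lemma1}, \ref{lemma5}, and \ref{lemma6}.
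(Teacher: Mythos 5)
Your proposal is correct and matches the paper exactly: the paper presents Theorem \ref{theorem4} as ``a direct consequence of Theorem \ref{theorem3}'' with no further argument, the point being precisely that the zero-free hypothesis on $\psi$ makes the conditional hypothesis of Theorem \ref{theorem3} vacuously true for any choice of $M$. Your explicit verification with $M=1$ is just a careful writing-out of that observation.
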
  
Now by Theorem \ref{theorem4}, we have the following four counterexamples to the converse of the Bloch's principle: 

\medskip

Consider $f(z)=e^z$. Then for $n_1 =1,~ n_2 = 0,~ m=2,~ k=1, ~a=-1, \mbox { and } ~b=1$, $\psi (z):=f(z)(f^2)' (z)+1-1=2e^{3z}$ has no zeros in $\mC$. Thus there is a non constant entire function with property $\mathcal{ P}:\psi(z)$ has no zeros in $\mC$. Hence in view of Theorem \ref{theorem4}, this is a counterexample to the converse of Bloch's principle.

\medskip

Similarly, for the same values of the constants $n_1 , n_2 , m, k, a, \mbox { and } b$, the meromorphic functions 
$$\frac{1}{z},\ \ \ \ \frac{1}{e^z +1},\ \ \ \text{tan}z \pm i ,$$  provide three more counterexamples to the converse of the Bloch's principle.

\bibliographystyle{amsplain}

\end {document}